\newfont{\footsc}{cmcsc10 at 8truept}
\newfont{\footbf}{cmbx10 at 8truept}
\newfont{\footrm}{cmr10 at 10truept}
\newcounter{example}
\newtheorem{Remark}{Remark}
\newtheorem{Conjecture}{Conjecture}
\newtheorem{Proposition}{Proposition}
\newenvironment{proof}{\medskip\noindent{\it Proof.\ }}{\hfill \mbox{$\Box$}\medskip}
\title{Some conjectures on the zeros 
of approximates to the Riemann $\Xi$-function and incomplete gamma functions}
\author{J. Haglund
\thanks{Work supported in part by NSF grants DMS-0553619 and DMS-0901467}\\
\small Department of Mathematics\\[-0.8ex]
\small University of Pennsylvania, Philadelphia, PA 19104-6395 \\[-0.8ex]
\small \texttt{jhaglund@math.upenn.edu }}
\date{October $16$, $2009$}
\begin{document}

\maketitle

\bibliographystyle{alpha}

\begin{abstract}
Riemann conjectured that all the zeros of the Riemann $\Xi$-function are real, which is now known as the
Riemann Hypothesis (RH).  In this article we introduce the study of the zeros of the truncated sums 
$\Xi _N(z)$ in
Riemann's uniformly convergent infinite series expansion of $\Xi (z)$ involving incomplete
gamma functions.  We conjecture that when the zeros of 
$\Xi _N (z)$ in the first quadrant of the complex plane are listed by increasing real part, 
their imaginary parts are monotone nondecreasing.  We show how this conjecture implies
the RH, and discuss some computational evidence for this and other related conjectures.
\end{abstract}

\section{Introduction}

Following Riemann (as described in a copy of an English translation of his
memoir contained in the appendix of \cite{Edwards}), let 
\begin{align}
\Xi (z) 
= \frac {1}{2}(.5+iz)(-.5+iz)\pi ^{-(.5+iz)/2}\Gamma (\frac{1}{2}
(.5+iz)) \zeta (.5+iz).
\end{align}
$\Xi (z)$ is an even, entire function, 
and the famous Riemann Hypothesis (RH) says that all the zeros of $\Xi$ are real.
Let $Q$ denote the first quadrant of the complex plane $\Re (z) \ge 0, \Im (z) \ge 0$.
Since $\Xi (z)$ is even and real on the real line, we can restate the RH
as saying all zeros of $\Xi (z)$ in $Q$ are real.  
Since $\zeta (s)$ is nonzero in $\Re (s)>1$, it follows that
all zeros of $\Xi$ in $Q$ satisfy $\Im (z) \le .5$.
In $1914$ Hardy (as reprinted in \cite{BCRW}) showed that
$\Xi (z)$ has infinitely many real zeros and in $1942$ Selberg \cite{Sel42} showed that a
positive proportion of the zeros of $\Xi (z)$ are real.  More recent work of
Conrey \cite{Con89} has at least $2/5$ of the zeros on the real line.

Riemann derived the following expression for $\Xi (z)$;
\begin{align}
\label{fourier}
\Xi (z) = \int _{0}^{\infty} \cos (zt) \phi (t) \, dt,
\end{align}
where $\phi (t) = \sum_ {n=1}^{\infty} \phi _n (t)$ with 
\begin{align}
\phi _n (t) = \exp (-n^2\pi \exp (2t))(8\pi ^2n^4 \exp (4.5t) - 12\pi
 n^2 \exp (2.5t)).
\end{align}
The function $\phi (t)$ is known to be an even function of $t$.
P\'olya \cite{Pol26} investigated ways of approximating $\phi (t)$ by simpler functions.
He showed that if in (\ref{fourier}) we replace $\phi (t)$ by 
\begin{align}
{\tilde \phi _1 (t)} = \exp (-\pi \cosh (2t))(8\pi ^2 \cosh (4.5t) - 12\pi
 \cosh(2.5t))
\end{align}
(obtained by replacing most of the
exponentials in the definition of $\phi _1 (t)$ by hyperbolic cosines),
then the resulting integral has only real zeros.  P\'olya also showed that if we replace $\phi (t)$ by any function which is not an even function of $t$, then the resulting integral has only finitely many real zeros.  Hejhal \cite{Hej92} 
investigated what happens if we replace $\phi (t)$ by a ``P\'olya
approximate", i.e. a finite sum of the form
\begin{align}
\sum_{n=1}^N 
\exp (-n^2\pi \cosh (2t))(8\pi ^2 n^4\cosh (4.5t) - 12\pi
 n^2\cosh (2.5t)).
\end{align}
By building on earlier work of Bombieri and Hejhal \cite{BoHe87}, which showed that, under
the Generalized Riemann Hypothesis (GRH) and some other assumptions, 
certain linear combinations of Dirichlet $L$-series asymptotically have $100\%$ of their zeros
on the critical line, Hejhal was able to show that if we replace $\phi (t)$ 
in (\ref{fourier}) by a P\'olya approximate the resulting function asymptotically
also
has $100\%$ of its zeros on the real line (but infinitely many
zeros off the line).  By $100 \%$ asymptotically we mean that
the proportion of zeros in $Q$ satisfying $\Re (z)\le m$ that are on the real line
approaches $1$ as $m \to \infty$.

The starting point for this investigation is the idea that
perhaps it is not necessary for worthwhile approximates to have all their
zeros on the real line.  If a given family of approximates approach $\Xi (z)$ 
uniformly, and if for each element in the family one could prove that within a certain sub-region
of $Q$ all the zeros are real, with the size of the sub-region expanding to
eventually include all of $Q$ as our
approximates approach $\Xi$, then this would also imply RH.  Thus it may be worth studying
replacements for $\phi (t)$ in (\ref{fourier}) which are not even.  With this in mind, a 
natural question to ask is what happens if we replace $\phi (t)$ by 
$\sum_{n=1}^N \phi _n (t)$.

\section{Preliminary calculations}

Let $G(z;a,b)$ denote the integral
\begin{align}
G(z;a,b) = 4\int _{0}^{\infty} \cos (2zu) \exp (2bu -a\exp(2u)) \, \, du,
\end{align}
where $z \in \mathbb C$, $a,b \in \mathbb R$ with $a>0$.  
Making the change of variable $t=a\exp (2u)$, so $dt=a\exp (2u)2 \, du$, 
and $du = \, dt/2t$, we get
\begin{align}
\label{hgamma}
G(z;a,b) &= 4\int _{a}^{\infty}
\exp (b \log (t/a) -t )
\cos (z \log (t/a) ) \, \frac {dt}{2t} \\
&= \int _{a}^{\infty} (t/a)^{b} \exp (-t) \left ( 
\exp (iz\log (t/a)) + \exp (-iz\log(t/a)) 
\right ) \\
&= \int _{a}^{\infty} \exp (-t) \left ( (t/a)^{b+iz} + (t/a)^{b-iz}  \right ) 
\, \frac {dt}{t} \\
&=
\frac {\Gamma (b+iz,a)}{a^{b+iz}} + 
\frac {\Gamma (b-iz,a)}{a^{b-iz}},
\end{align}
where
\begin{align}
\Gamma (z,a) = \int_a^{\infty} \exp (-t) t^{z} \, \frac {dt}{t}
\end{align}
is the (upper) incomplete gamma function.   For lack of a better name, 
we will refer to $G(z;a,b)$ as a ``hyperbolic gamma function".

From (\ref{fourier}) and (\ref{hgamma}) we have
\begin{align}
\label{XiG}
\Xi (z) = \sum_{n=1}^{\infty}2\pi ^2 n^4 G(z/2;n^2\pi,9/4) -3\pi n^2G(z/2;n^2\pi,5/4),
\end{align}
the interchange in integration and summation being justified by the uniform convergence.
For $a \in \mathbb R$, $a>0$, the function $\Gamma (z,a)$ is entire (as a function of $z$), 
and hence so is $G(z;a,b)$.  There is a routine in Maple to compute
$\Gamma (z,a)$, and (in the RootFinding package) a routine to compute the zeros, using the argument principle and Newton's method, of a given analytic
function in any rectangle of the complex plane.  Using this, the author made several computations to compute the zeros of the $\Xi$-approximates
\begin{align}
\label{GXi}
\Xi _N (z) :=
 \sum_{n=1}^N \Phi _{n}(z), 
\end{align}
where
\begin{align}
\Phi _n (z) :=2\pi ^2 n^4 G(z/2;n^2\pi,9/4) -3\pi n^2G(z/2;n^2\pi,5/4),
\end{align}
for various small values of $N$.  
Lists of zeros for some of these are
contained in the Appendix.  
In these computer runs, the parameter ``Digits" in Maple (which tells the computer to use this
many significant digits in all calculations) was typically set to $20N-10$ or so,
whatever number of digits was needed
to compute the function in question over the specified rectangle accurately to $20$ or so significant digits.
After runs were made first with Digits equal to $20N-10$, they were sometimes
run again with Digits equal to $20N$, and the resulting
zeros typically agreed to at least $16$ decimal digits or so, 
which the author has taken to mean 
the computer generated zeros (for Digits equal to $20N-10$) 
agree with the actual ones to at least $10$ decimal digits,
although no attempt has been made to establish rigorous error bounds.

We say that a given function $F(z)$ has {\it monotonic zeros} in a region $D$ of the complex plane if, 
when we list the zeros of $F$ in $D$ by increasing real part, the imaginary parts of the zeros are 
monotone nondecreasing.  Formally, if $\{\alpha _1, \alpha _2 , \ldots \}$ are the zeros of $F$ in $D$
numbered so that $\Re (\alpha _i ) < \Re (\alpha _{i+1}$ for $i\ge 1$, then
$\Im (\alpha _i) \le \Im (\alpha _{i+1})$.  (We assume $F$ has at most one zero on the intersection
of any vertical line with $D$.)
The data in the Appendix and other computer runs support 
the following hypothesis.

\begin{Conjecture}
\label{hyper}
For $N \in \mathbb N$, $\Xi _N (z)$ has monotonic zeros in $Q$.
\end{Conjecture}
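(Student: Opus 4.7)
The plan is to combine a steepest-descent analysis valid in the far part of $Q$ with a rigorous numerical check near the origin. Starting from the Fourier representation
\[
\Xi_N(z) = \int_0^\infty \cos(zt)\, \phi_{[N]}(t)\, dt, \qquad \phi_{[N]}(t) = \sum_{n=1}^N \phi_n(t),
\]
I would split $\cos(zt) = \tfrac12(e^{izt} + e^{-izt})$ and, for each $n$, deform the contour in the $t$-integral so as to pass through the saddle of the exponent $\pm izt - n^2\pi e^{2t}$ (adjusted by the polynomial factors $e^{2.5t}$ and $e^{4.5t}$ in $\phi_n$). This should give, for $|z|$ large in $Q$, an expansion
\[
\Xi_N(z) = A_N(z) + A_N(-z) + (\text{exponentially smaller terms}),
\]
where $A_N(z)$ is an explicit algebraic prefactor times a complex exponential $e^{i\Psi_N(z)}$ whose phase $\Psi_N$ comes from the saddle-point value. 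Since the saddle equation is $iz = 2n^2\pi e^{2t_*}$, giving $t_* = \tfrac12\log(iz/(2n^2\pi))$, the $n=1$ saddle should dominate the sum deep in the asymptotic regime.

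To leading order, the zeros of $\Xi_N$ in $Q$ then lie on a curve $\Gamma_N$ where the two terms cancel, which reduces to an equation of the form $\cos(\Psi_N(z) + o(1)) = 0$. The central calculation is to parameterize $\Gamma_N$ locally by a real parameter $s$, compute $d\Re(z)/ds$ and $d\Im(z)/ds$ from the implicit function theorem, and verify that both are strictly positive; monotonicity of the zeros of $\Xi_N$ in the large-$|z|$ regime then follows. For the complementary region $|z|\le M_N$, I would verify monotonicity directly by exhaustive enumeration using interval arithmetic, augmenting the Maple RootFinding routines already used in the paper with rigorous error bounds on each zero. The two regimes are glued together via Rouch\'e's theorem applied to small disks around the predicted zeros, valid once the asymptotic error is smaller than $|\Xi_N|$ on each disk's boundary.

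The hard part will be the analysis of $\Psi_N$. The sum over $n$ produces $N$ competing saddles, and while one dominates deep in the asymptotic regime, the transitions between regimes in which different saddles become relevant can distort the local geometry of $\Gamma_N$ enough to permit, a priori, non-monotone behaviour. More fundamentally, monotonicity is a global combinatorial property of the zero set rather than a local one: ruling out the scenario in which two consecutive zeros swap imaginary parts requires either a convexity estimate on $\Psi_N$ holding uniformly along $\Gamma_N$, or a quantitative lower bound on the spacing of adjacent zeros strong enough to absorb the asymptotic error term. Closing this global-versus-local gap is what appears to make Conjecture~\ref{hyper} genuinely difficult, even once the leading-order asymptotic picture is in hand.
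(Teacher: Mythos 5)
The statement you are addressing is not proved in the paper: it is Conjecture~\ref{hyper}, supported only by computational evidence, and Proposition~\ref{RH} shows that it implies the Riemann Hypothesis. Any complete proof of it would therefore prove RH, which should be taken as a strong warning that a sketch of this kind cannot close the argument. Your own final paragraph concedes the decisive gap: monotonicity is a global ordering property of the entire zero set, and neither the saddle-point expansion nor Rouch\'e localization supplies the uniform convexity or zero-spacing estimates needed to exclude two consecutive zeros exchanging the order of their imaginary parts. Until such an estimate is produced, what you have is a research program, not a proof.

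Two further concrete obstructions. First, the conjecture quantifies over all $N \in \mathbb N$, but your strategy handles the bounded region $|z| \le M_N$ by interval-arithmetic enumeration; since $M_N$ must grow without bound in $N$ (the largest real zero of $\Xi_N$ already grows steadily, per the table following Remark~\ref{weaker}), this can only ever verify finitely many $N$ and cannot establish the full statement. Second, the region your asymptotics would control is not where the difficulty lives: Section~\ref{asymptotics} of the paper already shows that the far zeros of any sum of hyperbolic gamma functions cluster near the curve $x \sim \frac{2}{\pi}\, y \ln y$, where monotonicity is at least plausible; the danger zone is the transition region where zeros first leave the real axis, between the largest real zero and the onset of the asymptotic regime, and that is precisely where all of the RH-relevant content sits and precisely what your plan delegates to unproven numerics. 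Note also that Section~\ref{combos} exhibits closely related functions --- single hyperbolic gamma functions $G(z;a,b)$ for suitable $a,b$, and the perturbations $t\,\Phi_1(z)+\Phi_2(z)$ --- whose zeros are \emph{not} monotonic in $Q$, so any successful argument must exploit the specific arithmetic structure of $\Xi_N$ rather than generic saddle-point geometry.
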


\begin{Proposition}
\label{RH}
Conjecture \ref{hyper} implies the Riemann Hypothesis.
\end{Proposition}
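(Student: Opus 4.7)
The plan is to argue by contraposition: assume the Riemann Hypothesis fails and produce, for all sufficiently large $N$, a pair of zeros of $\Xi_N$ in $Q$ that violate the monotonicity condition of Conjecture~\ref{hyper}. The two ingredients I would marshal first are (i) the uniform convergence $\Xi_N \to \Xi$ on compact subsets of $\mathbb{C}$, which follows from the rapid decay (in $n$) of the integrands $\phi_n(t)$ on $[0,\infty)$ and justifies an application of Hurwitz's theorem, and (ii) Hardy's theorem, which supplies infinitely many real zeros of $\Xi$ with real parts tending to $+\infty$.

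Assume RH fails. Then $\Xi$ has a zero $\alpha$ in $Q$ with $\Im(\alpha) > 0$. By Hardy's theorem there is a real zero $\beta$ of $\Xi$ with $\beta > \Re(\alpha)$, and by the zero-free region of $\zeta$ in $\Re(s)>1$ we already know $\Im(\alpha) \le 1/2$, so $\alpha$ lies in a fixed compact piece of $Q$. I would first verify that $\Xi_N(z)$ is an even entire function that is real on the real axis: evenness follows from $G(z;a,b)=G(-z;a,b)$ (swap the two terms in the incomplete gamma representation), and reality on $\mathbb{R}$ from the fact that $\Gamma(b+iz,a)/a^{b+iz}$ and $\Gamma(b-iz,a)/a^{b-iz}$ are complex conjugates when $z$ is real. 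Consequently the non-real zeros of $\Xi_N$ come in conjugate pairs, so a simple real zero $\beta$ of $\Xi$ must be approximated (by Hurwitz) by a genuine real zero $\beta_N$ of $\Xi_N$, not by a tightly-hugging conjugate pair. In parallel, Hurwitz applied near $\alpha$ produces a zero $\alpha_N$ of $\Xi_N$ with $\alpha_N \to \alpha$, hence with $\Im(\alpha_N) > 0$ and $\Re(\alpha_N) < \beta_N$ for all large $N$.

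Now $\alpha_N$ and $\beta_N$ are both zeros of $\Xi_N$ in $Q$, they lie on distinct vertical lines, and in the ordering by increasing real part the zero $\alpha_N$ precedes $\beta_N$, yet $\Im(\alpha_N) > 0 = \Im(\beta_N)$. This directly contradicts Conjecture~\ref{hyper}, completing the contrapositive.

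The part I expect to require the most care is the boundary/multiplicity bookkeeping needed to guarantee $\alpha_N \in Q$. If $\Re(\alpha) > 0$, this is immediate from continuity of roots, but if $\Re(\alpha) = 0$ (so $\alpha$ is purely imaginary) one must use the joint symmetries $\Xi_N(-z) = \Xi_N(z)$ and $\overline{\Xi_N(z)} = \Xi_N(\bar z)$: the four points $\pm \alpha_N, \pm\overline{\alpha_N}$ are all zeros and all cluster at $\alpha$, forcing (for simple $\alpha$) $\alpha_N = -\overline{\alpha_N}$, i.e.\ $\alpha_N$ stays on the imaginary axis and hence inside $Q$. Multiple zeros of $\Xi$ at $\alpha$ or $\beta$ are handled by Hurwitz's theorem in its counting form, which still delivers at least one $\Xi_N$-zero near each point lying in $Q$; all that the monotonicity conclusion needs is one such pair.
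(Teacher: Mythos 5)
Your argument is correct and is essentially the paper's own proof: both assume RH fails, use uniform convergence on compacta together with Hardy's theorem to produce for large $N$ a real zero of $\Xi_N$ far to the right and (via Hurwitz, which the paper proves by hand with the argument-principle contour integral) a non-real zero of $\Xi_N$ in $Q$ near the hypothetical off-line zero of $\Xi$, and these two zeros contradict monotonicity. Your extra care about conjugate symmetry, multiplicity, and the $\Re(\alpha)=0$ boundary case is sound and only adds detail the paper leaves implicit.
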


\begin{proof}
This follows from the argument principle, combined with
the simple fact that a function with infinitely many positive real zeros, and with 
monotonic zeros in $Q$, has only real zeros in $Q$.  
Assume the RH is false, and let $\tau$ be the zero of $\Xi (z)$ in $Q$ with minimal real part, among those zeros with 
positive imaginary part, and let $\tau = \sigma +it$.  By the argument principle, we have
\begin{align}
\frac {1}{2\pi i} \oint_{C_{\epsilon}}  \frac {\Xi ^{\prime} (z)}{\Xi (z)} \, \, dz =1,
\end{align}
where the integral is taken counterclockwise around a circle $C_{\epsilon}$ centered at $\tau$, 
of small radius $\epsilon$, so no other zeros of $\Xi (z)$
are enclosed in $C$.  Next 
choose $N$ sufficiently large so that $\Xi _N (z)$ has a real zero $\gamma$ with $\gamma > 2\sigma$.  We can do this
since $\Xi _N (z)$ converges uniformly to $\Xi (z)$ on compacta in $Q$, 
both $\Xi (z)$ and $\Xi _N (z)$ are real on the real line, and since $\Xi (z)$ has infinitely many positive real zeros.
By assumption $\Xi _N (z)$ has monotonic zeros in $Q$, hence has no non-real zeros in $Q$ with real part
less than $2\sigma$.  This implies 
\begin{align}
\frac {1}{2\pi i} \oint_{C_{\epsilon}}   
\frac {\Xi _N ^{\prime} (z)}{\Xi _N (z)}  dz =0,
\end{align}
and so
\begin{align}
1 &= \frac {1}{2\pi i} \oint_{C_{\epsilon}}  \frac {\Xi ^{\prime} (z)}{\Xi (z)} - 
\frac {\Xi _N ^{\prime} (z)}{\Xi _N (z)}  dz \\ 
\label{key}
 &= \frac {1}{2\pi i} \oint_{C_{\epsilon}}  \frac {\Xi ^{\prime} (z)\Xi _N (z) - \Xi _N ^{\prime} (z) \Xi (z) }
{\Xi (z) \Xi _N (z)}  \, \, dz .
\end{align}
On the closed and bounded set $C_{\epsilon}$, $|\Xi (z)|$ is nonzero and hence must assume an absolute minimum $\delta >0$.
Due to the uniform convergence, as $N \to \infty$, the minimum of $|\Xi _N (z)|$ on $C_{\epsilon}$ must eventually be
greater than $\delta /2$.  Hence for large $N$ the modulus of the denominator of the integrand in
(\ref{key}) is bounded away from zero, but (since $\Xi _N ^{\prime} (z)$ approaches $\Xi ^{\prime} (z)$ uniformly)
the numerator approaches zero, and so the integral will also
approach zero, a contradiction.
\end{proof}

\begin{Remark}
\label{weaker}
A weaker form of Conjecture \ref{hyper}, which still implies RH, is that there are no non-real zeros of $\Xi _N (z)$ in $Q$
whose real part is less than the largest real zero of $\Xi _N (z)$.  Since
$\Xi _N (z)$ is real for real $z$, the real zeros can be found by looking at sign changes along the real line.
Then the argument principle can be used via a numerical integration to obtain the total number of zeros
of $\Xi _N (z)$ with real part not greater than the largest real zero, and matched against the number of real
zeros.  Computations along these lines 
indicate this weaker form of
Conjecture \ref{hyper} is true at least for $N\le 10$.  Below we list the largest real zero and
number of real zeros of $\Xi _N (z)$ for $N \le 10$.
\end{Remark}
\begin{align}
\notag
\begin{matrix}
\underline{N} \quad & \underline{\text{largest real zero}} & \underline{\text{number of real zeros}} \\
1 \quad & 14.0454395788 & 1 \\
2 \quad & 39.5324810798 & 7 \\
3 \quad & 65.0320737720 & 15 \\
4 \quad & 103.3679880094 & 32 \\
5 \quad & 149.0026994921 & 53 \\
6 \quad & 197.9575955732 & 79 \\
7 \quad & 258.5304836632 & 113 \\ 
8 \quad & 327.3794646017 & 155 \\ 
9 \quad & 406.8174206801 & 207 \\
10 \quad & 489.3900649445 & 263 
\end{matrix}
\end{align}

\section{Other Zeta Functions}
\label{otherzeta}

Many other zeta functions which are conjectured to satisfy a Riemann Hypothesis can be approximated
by sums of hyperbolic gamma functions.
Let $\tau (n)$ denote 
Ramanujan's $\tau $-function.  Since the function
\begin{align}
F(z) = \sum_{n=1}^{\infty} \tau (n) \exp (2\pi  i n z)
\end{align}
is a modular form of weight $12$, we have
\begin{align}
F(ix) = x^{-12} F(i/x),
\end{align}
which can be used to show
\begin{align}
\label{Ram}
(2\pi )^{-s} \Gamma (s) \sum_{n=1}^{\infty} \frac {\tau (n)}{n^s} =
\sum_{n=1}^{\infty} \tau (n) \left ( 
\int_{1}^{\infty}  x^{s-1} \exp (-2\pi nx) \, dx  +
\int_{1}^{\infty}  x^{-s-1} x^{12} \exp (-2\pi nx) \, dx \right )
\end{align}
which implies
\begin{align}
\label{ramaj}
(2\pi )^{-6-iz} \Gamma (6+iz) \sum_{n=1}^{\infty} \frac {\tau (n)}{n^{6+iz}} =
\sum_{n=1}^{\infty} \tau (n) 
G(z;2\pi n,6),
\end{align}
again a uniformly convergent sum of hyperbolic gamma functions. (It is known that 
$|\tau (n)| = O(n^6)$ \cite{Apostol}.)   The function defined by the left-hand-side of
(\ref{ramaj}) is known as the Ramanujan $\Xi$-function, which we denote $\Xi_{\Delta} (z)$, and the approximate obtained by
truncating the series on the right-hand-side of (\ref{ramaj}) 
after $N$ steps we denote $\Xi _{\Delta ,N}(z)$.
Ramanujan conjectured that $\Xi_{\Delta} (z)$ 
has only real zeros, which is still open.  We mention that Ki \cite{Ki08} has studied the zeros
of different approximates to the Ramanujan $\Xi$-function.

\begin{Conjecture}
\label{RamDelta}
For $N \in \mathbb N$, $\Xi _{\Delta, N}(z)$ has monotonic zeros in $Q$.
\end{Conjecture}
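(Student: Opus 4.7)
My approach parallels what one would attempt for Conjecture \ref{hyper} itself, since $\Xi_{\Delta,N}(z)$ has exactly the same structural form as $\Xi_N(z)$ --- a partial sum of hyperbolic gamma functions $G(z;2\pi n,6)$, differing only in that the weights are now the arithmetic coefficients $\tau(n)$ rather than the explicit polynomial combination $2\pi^2 n^4 - 3\pi n^2$. First I would confirm monotonicity numerically for small $N$ using the same Maple RootFinding routines that produced the Appendix for $\Xi_N$, adjusting the working precision to accommodate the growth $|\tau(n)| = O(n^{11/2})$. This is an indispensable sanity check: the oscillating signs of $\tau(n)$ make it far from evident that the phenomenon observed in the Riemann case persists, and a numerical failure at modest $N$ would immediately falsify the conjecture.

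Second, I would establish the base case $N = 1$. Since $\tau(1) = 1$, we have $\Xi_{\Delta,1}(z) = G(z;2\pi,6)$, a single even hyperbolic gamma function. Using the integral representation
\begin{align}
G(z;2\pi,6) = 4\int_0^\infty \cos(2zu)\exp(12u - 2\pi e^{2u})\,du,
\end{align}
I would apply the saddle-point method at $u_0 = \tfrac{1}{2}\log(6/(2\pi))$ to locate the zeros asymptotically as $\Re(z) \to \infty$, then use the argument principle on rectangles $[0,R]\times[0,H] \subset Q$ to confirm monotonic (and ideally, entirely real) zero placement throughout $Q$.

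For general $N$, the plan is an inductive perturbation argument. Assuming monotonicity holds for $\Xi_{\Delta,N}$, apply Rouch\'e's theorem on vertical strips $[R_k,R_{k+1}]\times[0,H]$ chosen midway between consecutive real zeros of $\Xi_{\Delta,N}$, and try to show that each zero shifts only slightly upon adding the new term $\tau(N+1)\,G(z;2\pi(N+1),6)$, so that the sorted order of imaginary parts is preserved. Control of the perturbation would draw on Deligne's bound $|\tau(n)| \le d(n)\, n^{11/2}$ together with the rapid decay of $|\Gamma(6 \pm iz,2\pi n)/(2\pi n)^{6\pm iz}|$ for bounded $\Re(z)$ and large $n$, which makes the low-$n$ terms dominant in any fixed strip.

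\textbf{Main obstacle.} The fundamental difficulty is that the $\tau(n)$ carry no known sign or total-positivity structure, so zeros of $\Xi_{\Delta,N}$ can arise from delicate cancellations among summands of comparable magnitude. Monotonicity is a \emph{global} arrangement property, not a local count, and a single new term could in principle nudge a zero across a separating vertical line and destroy the sorted order; hence the inductive step demands a quantitative handle on zero locations far beyond a routine Rouch\'e estimate. Moreover, since Conjecture \ref{RamDelta} implies the Riemann Hypothesis for the Ramanujan $L$-function (by the same argument-principle reasoning as Proposition \ref{RH}, using uniform convergence $\Xi_{\Delta,N}\to\Xi_\Delta$ on compacta), any genuine proof should be expected to require essentially new analytic input --- perhaps a hidden positivity identity satisfied by the $G(z;2\pi n,6)$, or a strong Bombieri--Hejhal-type zero-density estimate tailored to Ramanujan's $L$-function --- rather than purely elementary complex analysis.
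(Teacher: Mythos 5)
You are attempting to prove a statement that the paper itself presents only as a conjecture: the author offers no proof of Conjecture \ref{RamDelta}, only numerical evidence (the table of largest real zeros and real-zero counts for $N\le 10$, plus the Appendix listings of the zeros of $\Xi_{\Delta,1}$ and $\Xi_{\Delta,2}$), together with the observation that a weaker form --- no non-real zeros with real part below the largest real zero --- has been verified computationally for $N\le 10$. Your proposal is candid about this and is really a research plan rather than a proof, but since it is framed as a proof attempt I should point out where it would break down. The base case is already off: by the paper's own table, $\Xi_{\Delta,1}(z)=G(z;2\pi,6)$ has \emph{no} real zeros at all, and the Appendix shows its zeros in $Q$ marching up along a curve $x\sim \frac{2}{\pi}y\ln y$ (consistent with the asymptotics of Section \ref{asymptotics}); so the hope of ``entirely real'' zero placement for $N=1$ is contradicted by the data, and even establishing monotonicity for this single hyperbolic gamma function is nontrivial --- the paper explicitly constructs values of $a,b$ for which $G(z;a,b)$ fails to have monotonic zeros in $Q$, so no generic saddle-point argument can settle the base case without using the specific values $a=2\pi$, $b=6$.

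The inductive step is the deeper gap, and you partly acknowledge it. Rouch\'e's theorem controls the \emph{number} of zeros in a region under a perturbation, not the relative ordering of their imaginary parts, which is the entire content of monotonicity; a zero can drift within its strip and overtake a neighbor without any change in counts. Worse, the paper shows that this kind of perturbation genuinely can destroy monotonicity: $t\Phi_1(z)+\Phi_2(z)$ acquires a far-out real zero (hence non-monotonic zeros) for suitable $0<t<1$, precisely because the coefficients of $1/x^2$ in the asymptotic expansions of the summands have opposite signs and nearly cancel. Since the $\tau(n)$ oscillate in sign, the analogous cancellation phenomenon is a live possibility for $\Xi_{\Delta,N}$, and nothing in your Deligne-bound decay estimate rules it out --- decay of the high-$n$ terms in a fixed strip says nothing about what happens far to the right, where the asymptotic main term $\sum_n \tau(n)\,2(2\pi n-6)e^{-2\pi n}/x^2$ governs the real zeros. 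Your closing paragraph correctly diagnoses that a real proof would need new analytic input of Bombieri--Hejhal strength, which is exactly why the paper leaves this as a conjecture; as it stands, the proposal does not constitute a proof and could not be completed along the lines sketched.
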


Computations analogous to those described in Remark
\ref{weaker} indicate that 
the corresponding weaker form of Conjecture \ref{RamDelta}, that there are no nonreal
zeros of 
$\Xi_{\Delta ,N}(z)$ in $Q$ with real part smaller than the largest real zero of
$\Xi_{\Delta ,N}(z)$, is true at least for $N\le 10$.  Below we list the number of
real zeros and the largest real zero of $\Xi _{\Delta,N}(z)$ for $N\le 10$.
\begin{align}
\notag
\begin{matrix}
\underline{N} \quad & \underline{\text{largest real zero}} & \underline{\text{number of real zeros}} \\
1 \quad &  & 0 \\
2 \quad & 9.1937689444922& 1 \\
3 \quad & 13.885647964708& 2 \\
4 \quad & 21.358047646119& 5 \\
5 \quad & 25.047323063922& 6 \\
6 \quad & 28.706422677689& 8 \\
7 \quad & 33.529929734593& 11 \\ 
8 \quad & 36.535376767485& 12 \\ 
9 \quad & 40.190608700694& 14 \\
10 \quad & 44.761812314903& 17
\end{matrix}
\end{align}

More generally, we can start with any entire modular cusp form 
\begin{align}
f(z) = \sum_{n=1}^{\infty} c(n) \exp (2\pi i z)
\end{align}
of weight $2k$, and set
\begin{align}
B(z) = (2\pi)^{-k-iz}\Gamma (k+iz) \sum_{n=1}^{\infty} \frac {c(n)}{n^{k+iz}}.
\end{align}
The modularity of $f$ can be used to show \cite[pp. 137-138]{Apostol} that
\begin{align}
\label{modular}
B(z) &= \int _{1}^{\infty} f(it) \left ( t^{k+iz} +
(-1)^{k} t^{k-iz} \right) dt/t \\
& 
\label{rhmod}
=\sum_{n=1}^{\infty} c(n) \int_{2\pi n} ^{\infty} 
\exp (-u) \left (  (u/2\pi n)^{k+iz} + (-1)^{k} (u/2\pi n)^{k-iz} \right ) \,  
\frac {du}{u},
\end{align}
where we have used the well-known bound $c(n) = O(n^k)$ to justify the interchange
of summation and integration.  Thus we see that, at least for $k$ even,
$B(z)$ is also a uniformly convergent (infinite) linear combination of hyperbolic gamma functions.

We can do a similar calculation for Dirichlet L-series $L(s,\chi)$ with 
$\chi$ a primitive character with modulus $q$.  Assume for the moment that
$\chi (-1)=1$, and define
\begin{align}
\Xi (z,\chi) =  \pi ^{-(1/4+iz/2)} \Gamma (1/4+iz/2) q^{1/4+iz/2} L(1/2+iz,\chi).
\end{align}
From \cite[pp.68-69]{Davenport} we have (using the fact that $\chi (n) = \chi (-n)$)
\begin{align}
\label{dir1}
\Xi (z,\chi) &=  \sum_{n=1}^{\infty} \chi (n) 
\int _{1}^{\infty} \exp (-n^2\pi t/q) \, t^{1/4+iz/2} \, \frac {dt}{t} \\
\label{dir2}
&+ \sqrt{q} w(\chi)
\sum_{n=1}^{\infty} \overline {\chi} (n) 
\int _{1}^{\infty} \exp (-n^2\pi t/q) \, 
t^{1/4-iz/2} \, \frac {dt}{t} \\
&
\label{erhplus1}
=\sum_{n=1}^{\infty} \chi (n) 
\frac {\Gamma (1/4+iz/2,\pi n^2/q)  }
{(\pi n^2/q)^{1/4+iz/2}}  +\\
& 
\notag
\sqrt {q} w(\chi) \sum_{n=1}^{\infty} \overline {\chi} (n) 
\frac {\Gamma (1/4-iz/2,\pi n^2/q)  }
{(\pi n^2/q)^{1/4-iz/2}}, 
\end{align}
where $\sqrt {q} w(\chi)$ is a certain complex number of modulus one and 
$\overline {\chi} (n)=\overline {\chi (n)}$ is the conjugate character.  Furthermore if
$\chi (-1)=-1$ we get
\begin{align}
\label{dir1b}
\pi ^{-(3/4+iz/2)} & \Gamma (3/4+iz/2) q^{3/4+iz/2} L(1/2+iz,\chi) \\
 &=  \sum_{n=1}^{\infty} n\chi (n) 
\int _{1}^{\infty} \exp (-n^2\pi t/q) t^{3/4+iz/2} \, \frac {dt}{t}+ \\
\label{dir2b}
&i\sqrt{q} w(\chi)
\sum_{n=1}^{\infty} n\overline {\chi} (n) 
\int _{1}^{\infty} \exp (-n^2\pi t/q)  
t^{3/4-iz/2}) \, \frac {dt}{t} \\
&
\label{erhminus1}
= \sum_{n=1}^{\infty}  n\chi (n) 
 \frac {\Gamma (3/4+iz/2,\pi n^2/q)  }
{(\pi n^2/q)^{3/4+iz/2}}  +\\
&
\notag
i\sqrt{q} w(\chi)
\sum_{n=1}^{\infty} n\overline {\chi} (n)
\frac {\Gamma (3/4-iz/2,\pi n^2/q)  }
{(\pi n^2/q)^{3/4-iz/2}}, 
\end{align}
where again $|i\sqrt {q}w(\chi)|=1$.  

Let $F(z)$ be the function defined by truncating the series on the right-hand-side of 
(\ref{rhmod}), or both of the series on the right-hand-side of
either (\ref{erhplus1}) or (\ref{erhminus1}), after $N$ steps.  
If Conjectures and \ref{hyper} and \ref{RamDelta} are true, one might suspect that $F(z)$ 
also has monotonic zeros in $Q$, although the author has not yet done any computations with these more
general sums.

Another interesting question is where the zeros of $\Gamma (z,a)$ are, for $a$ a positive real
number.  
Neilsen \cite{Neilsen} showed that $\Gamma (z,a)$ has no zeros in $\Re (z) <a$, and 
Gronwall \cite{Gro16}
proved that $\Gamma (z,a)$ has infinitely many zeros in $Q$.  Mahler \cite{Mah30}
showed that, as $a \to \infty$, the zeros of $\Gamma (az,a)$ cluster about
the limiting curve 
\begin{align}
\Re \left (z \log z + 1-z \right ) =0.
\end{align}
Tricomi and other authors
have investigated the zeros of (the meromorphic continuation of)
$\Gamma (z,a)$ as a function of $a$, for fixed $z$.
In summary, not much information 
seems to be known about the zeros of $\Gamma (z,a)$, for $a$ a fixed positive real number (
although the literature contains a number of 
detailed results on the zeros of the lower
incomplete gamma function $\Gamma (z) - \Gamma (z,a)$).  
In $1998$ Gautschi \cite{Gau98} published a nice survey of 
known results on incomplete gamma functions. 

Computer calculations support the following.
 
\begin{Conjecture}
\label{wgamma}
For any fixed positive real number $a$, the incomplete gamma function $\Gamma (z,a)$ has monotonic zeros in $Q$.
\end{Conjecture}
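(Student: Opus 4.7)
The plan is to combine an asymptotic analysis of the zeros of $\Gamma(z,a)$ far from the origin with a bounded--region argument based on the argument principle. The starting point is the decomposition $\Gamma(z,a) = \Gamma(z) - \gamma(z,a)$ together with the series representation $\gamma(z,a) = (a^z e^{-a}/z)\, M(1,z+1,a)$, where $M$ is the confluent hypergeometric function. Since $M(1,z+1,a) \to 1$ uniformly on any sector with $\Re z \ge a+\epsilon$ (which by Neilsen's theorem is where all the zeros live), a zero of $\Gamma(z,a)$ in $Q$ with large modulus must asymptotically satisfy $a^z e^{-a}/z \sim \Gamma(z)$. Applying Stirling, this reduces to the transcendental equation
\begin{align}
z\log\!\left(\tfrac{z}{ea}\right) \sim -a - \tfrac12\log(2\pi z) + O(1).
\end{align}

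First I would analyze this implicit equation to extract a smooth curve $\mathcal{C}_a \subset Q$ on which the zeros asymptotically lie, parametrized by imaginary part $v$: writing $z = u(v) + iv$ and separating real and imaginary parts, one obtains $u(v)$ as a smooth, slowly-varying function for $v$ large, tending to $\infty$ with $v$. A direct computation differentiating the implicit equation shows $du/dv > 0$ along $\mathcal{C}_a$, so imaginary part is strictly monotone in real part along the asymptotic curve. Sharper asymptotic expansions of $M(1,z+1,a)-1$ (with explicit remainder), combined with a Rouch\'e-type argument comparing $\Gamma(z,a)$ to the leading asymptotic on small discs around each candidate zero, would show that each actual zero lies within distance $o(1)$ of a unique point of $\mathcal{C}_a$; provided the spacing between consecutive points of $\mathcal{C}_a$ is not $o(1)$, monotonicity transfers from $\mathcal{C}_a$ to the true zeros.

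Second, for the bounded region $\{z \in Q : \Re z \le R\}$, I would combine Neilsen's strip $\Re z \ge a$ with the argument principle on rectangles $[a,u]\times[0,T]$ to count zeros with real part at most $u$, using the asymptotic analysis above to fix the total count for large $u$. This reduces the conjecture in the bounded region to a verification on finitely many zeros, accessible by the same Maple-based numerics already described in the paper. The match between the counts obtained asymptotically and by contour integration then rules out ``hidden'' zeros that could violate monotonicity between consecutive vertical strips.

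The main obstacle is the asymptotic step: because the curve $\mathcal{C}_a$ becomes nearly vertical for large $v$, consecutive zeros can have very close imaginary parts, and an error of order $1$ in the location of individual zeros would be enough to destroy monotonicity. One therefore needs asymptotics with error $o(\text{spacing})$, which in turn requires a uniform and explicit handling of the remainder in the expansion of $M(1,z+1,a)$ and a quantitative lower bound on $|d u / d v|$ along $\mathcal{C}_a$. Pushing this through rigorously --- rather than to leading order --- is the delicate part, and is presumably the reason the statement is left here as a conjecture.
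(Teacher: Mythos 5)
First, a point of order: the paper does not prove this statement. It is Conjecture~\ref{wgamma}, offered with computational evidence only, so there is no proof of record to compare yours against; your proposal has to stand on its own, and it does not. It is a program rather than a proof, as you concede in your final paragraph, and the gap you concede is both real and larger than you indicate. The leading asymptotic you start from (zeros force $a^z/(z\Gamma(z)e^a)\sim 1$, hence $y\sim\tfrac{2}{\pi}x\ln x$) is already derived in Section~\ref{asymptotics} of the paper, so the genuinely new content of your plan is precisely the part you leave open. Moreover, your implicit equation is missing the branch term: the zero condition is $z\log\bigl(z/(ea)\bigr)+a+\tfrac12\log(2\pi z)\approx 2\pi i k$ for integers $k$, and differentiating in $k$ gives $dz/dk\approx 2\pi i/\log(z/a)$; since $\arg z\to\pi/2$, consecutive zeros have imaginary parts differing by about $2\pi/\log|z|$ but real parts differing by only about $\pi^2/(\log|z|)^2$. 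Monotonicity is a statement about the ordering of the zeros by \emph{real part}, so you must locate each zero with error $o\bigl(1/(\log|z|)^2\bigr)$, not merely better than ``an error of order $1$'' as you state. Obtaining uniform, explicit remainders for $M(1,z+1,a)$ and for Stirling's series to that accuracy, together with a Rouch\'e argument on discs of that shrinking radius, is the entire difficulty, and nothing in the sketch addresses it.

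The bounded-region half of the plan also cannot be discharged as described. The conjecture quantifies over every positive real $a$, so ``a verification on finitely many zeros, accessible by Maple-based numerics'' can never close the argument: it treats at most finitely many values of $a$; the threshold beyond which your asymptotics take over is not made effective in $a$; and the paper's own numerics explicitly come with no rigorous error bounds. The argument principle counts zeros in a rectangle but says nothing about the ordering of their imaginary parts, which is the actual content of the conjecture, and your monotonicity claim $du/dv>0$ along $\mathcal{C}_a$ is asserted rather than derived. Finally, the zeros of small modulus --- where neither Neilsen's bound $\Re z\ge a$ nor the asymptotics give usable information --- are covered by neither half of the plan. The statement remains, as in the paper, a conjecture.
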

Although some analog of Conjecture \ref{wgamma} may be true 
for hyperbolic gamma
functions,  in Section \ref{combos} 
we show that there exist some choices of $a,b \in \mathbb R$, $a>0$ for
which $G(z;a,b)$ does not have monotonic zeros in $Q$.

\begin{Remark}
To say a function has monotonic zeros in $Q$ is equivalent to saying that the first differences of
the imaginary parts of the zeros are all nonnegative.  The zeros in $Q$
of $\Gamma (z,a), \, \, a>0$ 
seem to satisfy the more general property 
that the $k$-th differences of the imaginary parts of the zeros are 
positive for $k$ odd and negative for $k$ even, for $k \le 7$ or $8$.  Thus  
these zero sets seem to have extra structure beyond being monotonic.
For linear combinations of hyperbolic gamma functions
the same phenomena seems to occur for $x$ sufficiently large, 
which may be due to the main term in the asymptotics 
controlling the zeros.
\end{Remark}

\section{Asymptotics}
\label{asymptotics}

Throughout this section $z=x+iy \in Q$, $x,y\ge 0$, $\theta = \arg (z), 0\le \theta \le \pi/2$,
$a,b \in \mathbb R, a>0$.
We begin with Stirling's formula and some other known results:
\begin{align}
\label{Stirling}
\Gamma (z)= \sqrt{\frac{2\pi}{z}}\left ( \frac{z}{e} \right ) ^{z} \left ( 1 + O(1/z)
\right )
\end{align}
(where $ -\pi < \arg (z) \le \pi$)
\begin{align}
\label{ig}
\Gamma (z,a) = \Gamma (z) - \frac{a^z}{ze^a} 
\left ( 1 + \frac{a}{z+1} + \frac{a^2}{(z+1)(z+2)}
+\ldots + \frac{a^k}{(z+1)_k} + \ldots \right )
\end{align}
(\cite[Vol. II, p. 135]{Bateman})
\begin{align}
\label{simple}
\frac{\Gamma (b+z)}{\Gamma (z)} = z^b\left ( 1 + O(1/z) \right)
\end{align}
(\cite[Vol. I, p. 47]{Bateman}),
where in the big-Oh results we mean as $|z| \to \infty$.  From (\ref{Stirling}) we see
that 
\begin{align}
|\Gamma (z)| = \sqrt {\frac{2\pi}{|z|}} \exp ( x\ln |z| -x - y\theta) 
\left(1 + O(1/|z|) \right ).
\end{align}
Now $\Gamma (z,a)=0$ if and only if $\Gamma (z,a)/\Gamma (z)=0$ so by (\ref{ig}) 
$\Gamma (z,a)=0$, $|z|$ large implies
\begin{align}
\frac{a^z}{z\Gamma (z) e^a} \sim 1
\end{align}
or
\begin{align}
\exp(x\ln |z| -x -y\theta) \sqrt{2\pi|z|} \sim \exp(x\ln a -a).
\end{align}
If $y$ remains bounded and $x\to \infty$, the left hand side above grows too fast, while
if $x$ remains bounded and $y \to \infty$ it decreases too fast.  Hence we need both 
$x,y \to \infty$.  Furthermore, $y\theta$ must be asymptotic to $x\ln x$, 
hence $\theta \to \pi/2$ and
the zeros of $\Gamma (z,a)$ in $Q$ satisfy 
$y\sim \frac {2}{\pi} x\ln x$ as $|z| \to \infty$.

To perform the same analysis for $G(z;a,b)$, first note that
\begin{align}
|\Gamma (iz)| &= \sqrt{ \frac{2\pi}{|z|} }
\exp \left ( -y\ln |z| +y -x(\theta + \pi/2) \right ) \left ( 1 + O(1/|z|) \right ) \\ 
|\Gamma (-iz)| &= \sqrt{ \frac{2\pi}{|z|} }
\exp \left ( y\ln |z| -y +x(\theta - \pi/2) \right ) \left ( 1 + O(1/|z|) \right ). 
\end{align}
Thus using (\ref{simple}),
\begin{align}
\label{G1}
\frac{|\Gamma (b+iz)|}{|a^{b+iz}|} &= \sqrt{ \frac{2\pi}{|z|} } |z|^b
\exp \left ( -y\ln |z| +y -x(\theta + \pi/2) -(b-y)\ln a \right ) \left ( 1 + O(1/|z|) 
\right ) \\
\label{G2}
\frac{|\Gamma (b-iz)|}{|a^{b-iz}|} &= \sqrt{ \frac{2\pi}{|z|} } |z|^b
\exp \left ( y\ln |z| -y +x(\theta - \pi/2) -(b+y)\ln a \right ) \left ( 1 + O(1/|z|) \right ). 
\end{align}
From (\ref{ig}),
\begin{align}
\label{GG1}
G(z;a,b) = 
\frac{\Gamma (b+iz)}{a^{b+iz}} +
\frac{\Gamma (b-iz)}{a^{b-iz}}  \\
\notag
-\frac{1}{(b+iz)e^a} \sum_{k=0}^{\infty} \frac{a^k}{(b+iz+1)_k}    
-\frac{1}{(b-iz)e^a} \sum_{k=0}^{\infty} \frac {a^k}{(b-iz+1)_k}.
\end{align}
One finds  
\begin{align}
\label{GG3}
-\frac{1}{(b+iz)e^a} \sum_{k=0}^{\infty} \frac{a^k}{(b+iz+1)_k} 
-\frac{1}{(b-iz)e^a} \sum_{k=0}^{\infty} \frac {a^k}{(b-iz+1)_k} \\ 
\notag
= \frac {2(a-b)}{e^a z^2} + \frac{2(b^3-2ab^2-ab+3a2b+3a^2-a^3)}{e^az^4} + O(1/z^6).
\end{align}
If $y$ remains bounded and $x$ doesn't, then the first two terms on the right-hand-side of (\ref{GG1}) 
approach $0$ like $\exp(-x\pi /2)$, so by (\ref{GG3}), the expansion of
$G(z;a,b)$ in negative powers of $z$ has a nonzero coefficient of $z^{-2}$, unless $a=b$ in which case
it has a nonzero coefficient of $z^{-4}$.  In either case it cannot equal $0$ for sufficiently
large $x$.
If $x$ remains bounded and $y$ doesn't, then the second term on the right-hand-side of
(\ref{GG1}) blows up, while the others approach $0$.  
So as $|z|\to \infty$, for fixed $a,b$, if $G(z;a,b)=0$ we need both $x,y \to \infty$, and 
thus from (\ref{G2}) $x(\theta - \pi/2)+y\ln |z|$ cannot approach positive or negative infinity
too quickly.  In fact 
the zeros of $G(z;a,b)$ must satisfy 
$x \sim  \frac {2}{\pi} y \ln y$ as $|z| \to \infty$.

The argument above also applies to any function of the form
\begin{align}
\label{GGH}
\sum_{k=1}^N u_k\, G(z;a_k,b_k)
\end{align}
$a_k,b_k \in \mathbb R, u_k \in \mathbb C, a_k>0 $, i.e. any $\mathbb C$-linear
combination of hyperbolic gamma functions.  For if you have a linear combination of terms like
(\ref{GG1}), more than one of which is approaching $\infty$, the linear combination must also approach
$\infty$, since by taking into account the contribution of $a_k,b_k$, no two such terms can approach
$\infty$ at the same rate.  The other parts of the argument follow through similarly
(the coefficient of $1/z^{2j}$ in the appropriate
version of (\ref{GG3}) must be nonzero for some $j$, else (\ref{GGH}) would essentially reduce to
a linear combination of Gamma functions, which would not be entire) 
and thus the zeros of any function of the form (\ref{GGH}) also satisfy
$x \sim  \frac{2}{\pi} y \ln y$ as $|z| \to \infty$.

\section{Linear Combinations}
\label{combos}

The examples in Section \ref{otherzeta} involving Dirichlet characters motivate
defining, for $a,b \in \mathbb R$, $a>0$, $w, \alpha \in \mathbb C$, $|w|=1$, 
the generalized hyperbolic 
gamma function as 
\begin{align}
G(z;a,b,\alpha ,w) = 
\alpha \frac {\Gamma (b+iz,a)}{a^{b+iz}} +
\overline {\alpha}   \frac {\Gamma (b-iz,a)}{a^{b-iz}}w.
\end{align}
The author has made several hundred computer runs, calculating the zeros of various
arbitrary $\mathbb C$-linear combinations of generalized hyperbolic gamma functions in
different regions of $Q$.  Surprisingly, in
all of these runs the zeros turned out to be monotonic.  Perhaps this results from a 
mysterious analytic principle, not yet understood, which causes generic sums of generalized
hyperbolic gamma functions to have a high probability of having monotonic zeros.  In this 
case the RH could be a consequence of this analytic principle, combined with Hardy's 
theorem that $\Xi (z)$ has infinitely many real zeros.

It is not the case though that all linear combinations of generalized hyperbolic gamma functions
have monotonic zeros in $Q$.  For example, consider any function of the form 
$t_1A(z) + t_2 B(z)$, where
$t_1,t_2$ are positive real numbers and $A(z),B(z)$ are any two $\Xi$-functions,
corresponding to different
zeta functions 
from Section \ref{otherzeta}, which are real on the real line.  Since $A(z)$ and $B(z)$ have infinitely many real zeros, 
as $z \to \infty$ along the real line they each oscillate from positive to negative.  
Unless there is an
unsuspected correlation between
the two, any real linear combination of them will also oscillate and thus have 
infinitely many real zeros.  But only for very special choices
of $A,B,t_1,t_2$ will this linear combination correspond to a zeta function with an Euler product, and without an underlying Euler product it is 
generally expected that such functions 
will have infinitely
many non-real zeros as well.  In particular, $\Xi (2z) + \Xi _{\Delta ,5}(z)$, the sum of the 
Riemann $\Xi$-function and the fifth approximate to the 
Ramanujan $\Xi$ function, has a non-real zero with real part between 19 and 23, and a real zero at
$z=24.99871$.  (Here $\Xi (z)$ is evaluated at $2z$ to make the two functions compatible,
since the expression (\ref{XiG}) of $\Xi (z)$ in terms of
hyperbolic gamma functions involves $z/2$, while that of $\Xi _{\Delta}$ involves $z$.) 

More simply, one can create an example of non-monotonic zeros
by considering what happens to $\Xi _N (z)$ as $z \to \infty$ along the positive real line.
From (\ref{GG1}) and (\ref{GG3}) we get
\begin{align}
\label{GX1}
G(x;a,b) = \frac {2(a-b)}{e^a} \frac {1}{x^2} + O \left ( \frac {1}{x^4}\right ).
\end{align}
Applying this to (\ref{GXi}) yields 
\begin{align}
\Phi _n (x) = 
\frac {4 n^4\pi ^2 (n^2 \pi - 9/4) - 6n^2\pi ( n^2\pi - 5/4)}{\exp(n^2\pi) } \frac {1}{x^2}
+ O \left (\frac {1}{x^4} \right ),
\end{align}
which shows
\begin{align}
\Phi _1 (x) &= -.01974938206/x^2 + O(1/x^4) \\
\notag
\Phi _2 (x) &= .01974934121/x^2 + O(1/x^4) \\
\notag
\Phi _3 (x) &= .4132639753 \times 10^{-7}/x^2 + O(1/x^4).
\end{align}
For $k>3$ the coefficient of $1/x^2$ in $\Phi _k (x)$ is positive.
Thus the coefficient of $1/x^2$ in 
$\Xi _N (x)$ is positive for $k\ge 3$ and negative for $1\le k<3$.
It follows that $t\Phi_1(x)+\Phi_2(x)$ is positive for sufficiently large $x$ if $t=0$ 
and negative for sufficiently large $x$ if $t=1$, and so $t\Phi_ 1(z)+\Phi _2(z)$ has a zero on the 
real line for some large $x$ and some $0<t<1$.  One finds in fact that there is a real zero
between $x=10^5$ and $x=10^6$ when $t=.999997907459$, 
which occurs after many non-real zeros, giving an example
of non-monotonic zeros.  This real zero travels
travels quickly left as $t$ increases through real values, 
arriving at $x=39.53248$ (the largest real zero of $\Xi _2(z)$) when $t=1$.

One could hope that any series of the form
\begin{align}
F(z) = \sum_{k=1}^{\infty} c_k \Phi _k(z), \quad c_k \in \mathbb R, c_k \ge 0
\end{align}
has monotonic zeros, modulo the problem of the forced real zero described above, which could be avoided by
say requiring $c_1=c_2=c_3$, since then 
\begin{align}
F(x) = \frac {c}{x^2} + O \left ( \frac {1}{x^4} \right )
\end{align}
for some $c>0$. 
A family $\{f_1(z),\ldots ,f_k(z)\}$ of polynomials with real coefficients 
is called {\it compatible} if 
\begin{align}
\sum_{j=1}^k c_j f_j (z)
\end{align}
has only real zeros whenever $c_j \in \mathbb R , c_j \ge 0$ for $1\le j \le k$.  It is
called pairwise compatible if $\{f_i,f_j\}$ forms a compatible family for each pair
$1\le i < j \le k$.  Chudnovsky and
Seymour \cite{ChSe07} have shown that a family of polynomials 
whose members have only real zeros and
positive leading coefficients is compatible if and only if it is pairwise compatible.  
Does a similar
statement hold if we replace real polynomials with positive leading
coefficients and only real zeros, by even, real, entire functions
having monotonic zeros in $Q$?

Eq. (\ref{GX1}) also leads to an 
example of a single hyperbolic gamma function with non-monotonic zeros.
For very large $x$ clearly $G(x;a,2a)<0$ and 
$G(x;a,0)>0$, so there must be some value of $b$, $0<b<2a$ for which 
$G(x;a,b,1,1)=0$ for some very large
$x$, which will thus have non-monotonic zeros in $Q$.  

Another interesting phenomena occurs when we consider the zeros of
$\Xi _k(z) + t\, \Phi _{k+1}(z)$.  As we let $t$ vary continuously from $0$ to $1$,
computations indicate that the imaginary parts of the non-real zeros decrease
monotonically (i.e. continuously), in a very regular manner, until, for high enough $k$, they collide with 
the corresponding zero (from Schwartz reflection) in the fourth quadrant, and arrive on
the real line, where they remain.
\begin{Conjecture} 
\label{tparameter}
For $k\ge 1$, 
the imaginary part of each non-real zero of
$\Xi _k(z) +t\, \Phi _{k+1}(z)$ decreases monotonically (i.e. continuously) as $t$ goes from $0$ to $1$.
\end{Conjecture}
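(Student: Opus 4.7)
My first step would be to parametrize the zeros of $F_t(z) := \Xi_k(z) + t\Phi_{k+1}(z)$ as functions of $t$. Wherever $\alpha_0 = \alpha(t_0)$ is a simple zero, the implicit function theorem yields a local analytic branch with
\begin{align}
\alpha'(t) = -\frac{\Phi_{k+1}(\alpha(t))}{F_t'(\alpha(t))},
\end{align}
so Conjecture \ref{tparameter} reduces to (i) the inequality $\mathrm{Im}(\alpha'(t)) \le 0$ along every branch so long as it remains non-real, and (ii) the statement that no non-simple zero appears off the real axis for $t \in (0,1)$, so that the conjugate pairs (which exist by Schwartz reflection, every $\Phi_n$ being real on $\mathbb{R}$) meet only on the real line.

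The cleanest way to recast (i) is through the meromorphic ratio $R(z) := -\Phi_{k+1}(z)/\Xi_k(z)$, since the non-real zeros of $F_t$ are precisely the solutions of $R(z) = 1/t$. Differentiating and substituting $\Xi_k(\alpha) = -t\Phi_{k+1}(\alpha)$ at a zero gives
\begin{align}
R'(\alpha) = \frac{F_t'(\alpha)}{t^2\Phi_{k+1}(\alpha)}, \qquad \alpha'(t) = -\frac{1}{t^2 R'(\alpha(t))}, \qquad \mathrm{Im}\bigl(\alpha'(t)\bigr) = \frac{\mathrm{Im}\bigl(R'(\alpha(t))\bigr)}{t^2\,|R'(\alpha(t))|^2}.
\end{align}
Hence (i) is equivalent to the purely analytic assertion
\begin{align}
\mathrm{Im}\bigl(R'(z)\bigr) \le 0 \qquad \text{whenever } z \in Q,\ \mathrm{Im}(z) > 0,\ R(z) \in [1,\infty).
\end{align}

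My plan for proving this sign inequality is to split into large and bounded $|z|$. For $|z|$ large, I would substitute the expansions (\ref{GG1})--(\ref{GG3}) of Section \ref{asymptotics} into $F_t'(\alpha)/(t^2 \Phi_{k+1}(\alpha))$ and exploit the asymptotic location $x \sim (2/\pi) y \ln y$ of zeros of linear combinations of hyperbolic gamma functions to extract the sign of the imaginary part; the dominant term should give a strict inequality once $|\alpha|$ exceeds some threshold depending on $k$. For the bounded region a direct argument seems hard, and the most promising route is an induction on $k$ in which Conjecture \ref{hyper} for $\Xi_k$ and $\Xi_{k+1}$ serves as input, coupled with a Hurwitz-type continuation tracking each non-real branch of $F_t$ from $t=0$ to $t=1$ and ruling out sign reversals of $\mathrm{Im}(R'(\alpha(t)))$ via intermediate numerical verification along the lines of Remark \ref{weaker}.

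Step (ii) and the soft landing on the real axis I would handle by a local normal form: at the instant $t^*$ when a conjugate pair meets at $x_0 \in \mathbb{R}$, $F_{t^*}$ has a real double zero there, and examining $\partial_t F_t(x_0) = \Phi_{k+1}(x_0) \in \mathbb{R}$ together with $F_{t^*}''(x_0) \in \mathbb{R}$ should force the pair to bifurcate into two distinct real zeros for $t > t^*$ rather than re-enter the upper half plane. The main obstacle is squarely the sign inequality $\mathrm{Im}(R'(z)) \le 0$ in the bounded regime: the asymptotics of Section \ref{asymptotics} control only the leading behavior, and there is no evident structural identity forcing the subdominant terms of $R'$ to preserve the correct sign along the entire non-real component of the level set $\{\mathrm{Im}(R) = 0\}$. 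Absent such an identity, a genuinely new idea---perhaps a feedback between this conjecture and Conjecture \ref{hyper} that lets them be proved simultaneously by induction on $k$---seems to be needed.
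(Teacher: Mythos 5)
The statement you are addressing is Conjecture \ref{tparameter}, which the paper leaves entirely open: it is stated on the strength of computational evidence only, and no proof (or even proof sketch) is offered in the text. So there is nothing in the paper to compare your argument against; the only question is whether your proposal actually constitutes a proof. It does not. Your algebraic reduction is correct and worth keeping --- implicit differentiation does give $\alpha'(t) = -\Phi_{k+1}(\alpha)/F_t'(\alpha)$ at a simple zero, the substitution $\Xi_k(\alpha) = -t\,\Phi_{k+1}(\alpha)$ does yield $R'(\alpha) = F_t'(\alpha)/(t^2\Phi_{k+1}(\alpha))$ and hence $\mathrm{Im}(\alpha'(t)) = \mathrm{Im}(R'(\alpha))/(t^2|R'(\alpha)|^2)$, so the conjecture is genuinely equivalent (on simple, non-real branches) to the sign condition $\mathrm{Im}(R'(z)) \le 0$ on the non-real part of the level set $R(z) \in [1,\infty)$ in $Q$. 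But that sign condition is exactly where the entire difficulty of the conjecture now sits, and you do not prove it. The asymptotic expansions (\ref{GG1})--(\ref{GG3}) control only the leading behavior of $G(z;a,b)$ for $|z|$ large and are never actually substituted to extract a sign; even if that step were carried out it would cover only zeros beyond an unspecified threshold. For the bounded region you propose to rely on Conjecture \ref{hyper} (itself open) and on ``intermediate numerical verification,'' neither of which is admissible in a proof. You acknowledge this candidly in your final sentence, which is to your credit, but it means the proposal is a research programme, not a proof.

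Two smaller points. First, your step (ii) normal-form argument for the collision of conjugate pairs is also incomplete: knowing $\partial_t F_t(x_0) \in \mathbb{R}$ and $F_{t^*}''(x_0) \in \mathbb{R}$ at a real double zero tells you the local Puiseux branches come in a conjugate pair, but whether they emerge onto the real axis or back off it depends on the sign of $\Phi_{k+1}(x_0)/F_{t^*}''(x_0)$, which you have not controlled; moreover you must separately exclude double zeros \emph{off} the real axis for $t \in (0,1)$, and nothing in the paper or your argument rules these out. Second, the pointwise inequality $\mathrm{Im}(\alpha'(t)) \le 0$ only gives monotone decrease of $\mathrm{Im}(\alpha(t))$ along intervals where the branch stays simple; you would still need to patch across any degenerate parameter values. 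In short: the reformulation in terms of $R$ is a correct and potentially useful observation, but the conjecture remains as open after your argument as before it.
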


\section{The modulus on vertical rays}

It is known that the RH is equivalent to the statement that the modulus of
$\Xi (z)$ is monotone increasing along any vertical ray which starts at a point $x\ge 0$ on the
nonnegative real line and travels straight upward to $x + i\infty$.  (Clearly if the RH is false
the statement is false.  On the other hand, if the RH is true, start with Hadamard's 
factorization theorem
\begin{align}
|\Xi (z)| = |\Xi (0)| \prod _i |1 - z^2/\alpha _i ^2|,
\end{align}
where the $\alpha _i$ are the positive real zeros of $\Xi (z)$, and take the partial derivative
with respect to $y$, where $z=x+iy$.  This is easily seen to be positive for positive $y$.)

For a given function $F(z)$, analytic in $Q$, and $\alpha$ a nonnegative real number, let
$M(F,\alpha)$ denote the value of $y\ge 0$ where the function $|F(\alpha+iy)|$ is minimal, i.e. the
height where the minimum of the modulus of $F$ occurs on the vertical ray starting at
$\alpha$ and going straight up.  (If this minimum occurs at more than one $y$, let $M(F,\alpha)$
denote the lim inf of such $y$.)  Finally call the set of pairs $\{(\alpha,M(F,\alpha))\}$ in $Q$ the
``M-curve" of $F$.  

Examples of these curves are given in Figures \ref{minmod1}, 
\ref{minmod2},  and 
\ref{minmod3} 
below.  To calculate $M(F,\alpha)$ for a given pair $F$ and $\alpha$ 
the author
simply calculated the modulus of $F(\alpha+iy)$ at many closely spaced grid points $y$ 
and then chose the $y$ which
gave the minimum of these numbers (from the asymptotics, the modulus of a sum of
hyperbolic gamma functions increases quite rapidly when $y$ increases beyond a certain point).   
This same procedure
was followed for several closely spaced grid points $\alpha$, and the pairs $(\alpha,M(F,\alpha))$ 
then plotted
on a grid using Maple, with the result looking something like a continuous curve.
They seem to have the (as yet unexplained) property that
the non-real zeros of $F$ in $Q$ occur at the same places as 
the local maxima of the
M-curve.  
If so, this would give another (rather informal)
method for calculating the zeros of $\Xi _N (z)$ and other sums of hyperbolic gamma functions
which doesn't depend on the argument principle or
Newton's method.   Other computations indicate this property may also hold for any real polynomial with no two zeros 
in the upper half-plane with the same real part. 

\subsection{Some Notes on the Computations}

The list of zeros of $\Xi _1 (z)$ in the Appendix was calculated using the argument principle and Newton's method
by a call to the function analytic in Maple, as were the zeros of $\Xi _{\Delta ,1}(z)$,
$\Xi _{\Delta ,2}(z)$, 
and the zeros of the sum of two generalized hyperbolic gamma functions.  This procedure
failed however to compute the zeros of $\Xi _2 (z)$, as the program never finished even after running for over two 
days on the Sun system at the University of Pennsylvania.  Note that $\Xi _k(z)$ is a sum of $4k$ incomplete gamma
functions, which may explain why the computation of the zeros of $\Xi _k (z)$ quickly becomes difficult.  The author
was able to find the zeros of $\Xi _2(z)$ by starting with the zeros of $\Xi _1 (z)$ and using Newton's method to
find the zeros of 
$\Xi _1 (z) + t \, \Phi _2 (z)$, for $t$ a small positive number, then recursively using these 
new zeros and Newton's method
to find the zeros for a slightly larger value of $t$, slowly increasing $t$ until $t$ equaled $1$.  
The author then tried to
compute the zeros of $\Xi _3 (z)$ in the same way, by starting with the zeros of
$\Xi _2 (z)$ and using Newton's method to compute the zeros of $\Xi _2 (z) + t \, \Phi _3 (z)$ for small $t$, and
gradually increasing $t$ as before.  This worked well until $t$ became very close to $1$, about $t=.99$, at which
point Newton's method no longer converged.  If the property
discussed in the previous paragraph holds though, we can see from Figure \ref{minmod2} where the non-real zeros of
$\Xi _3(z)$ in the range $0 \le \Re (z) \le 120$ are.

\begin{figure}
\includegraphics[scale=.8]{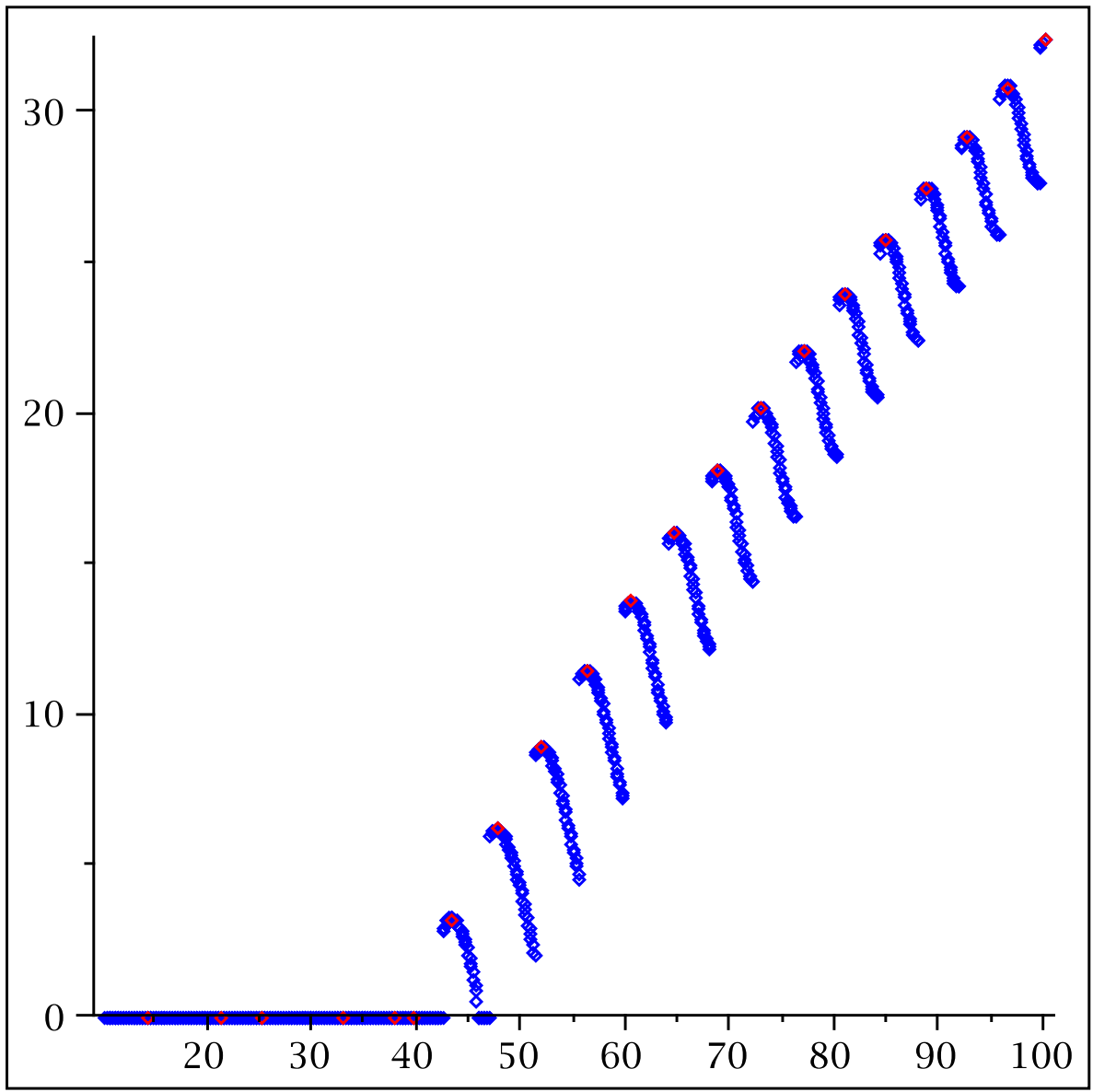}
\caption{The M-curve (in blue) for $\Xi _2 (z)$, $0 \le \Re (z) \le 100$.
The zeros are overlaid in red;
note they occur at the local maxima of the M-curve.}
\label{minmod1}
\end{figure}

\begin{figure}
\includegraphics[scale=.8]{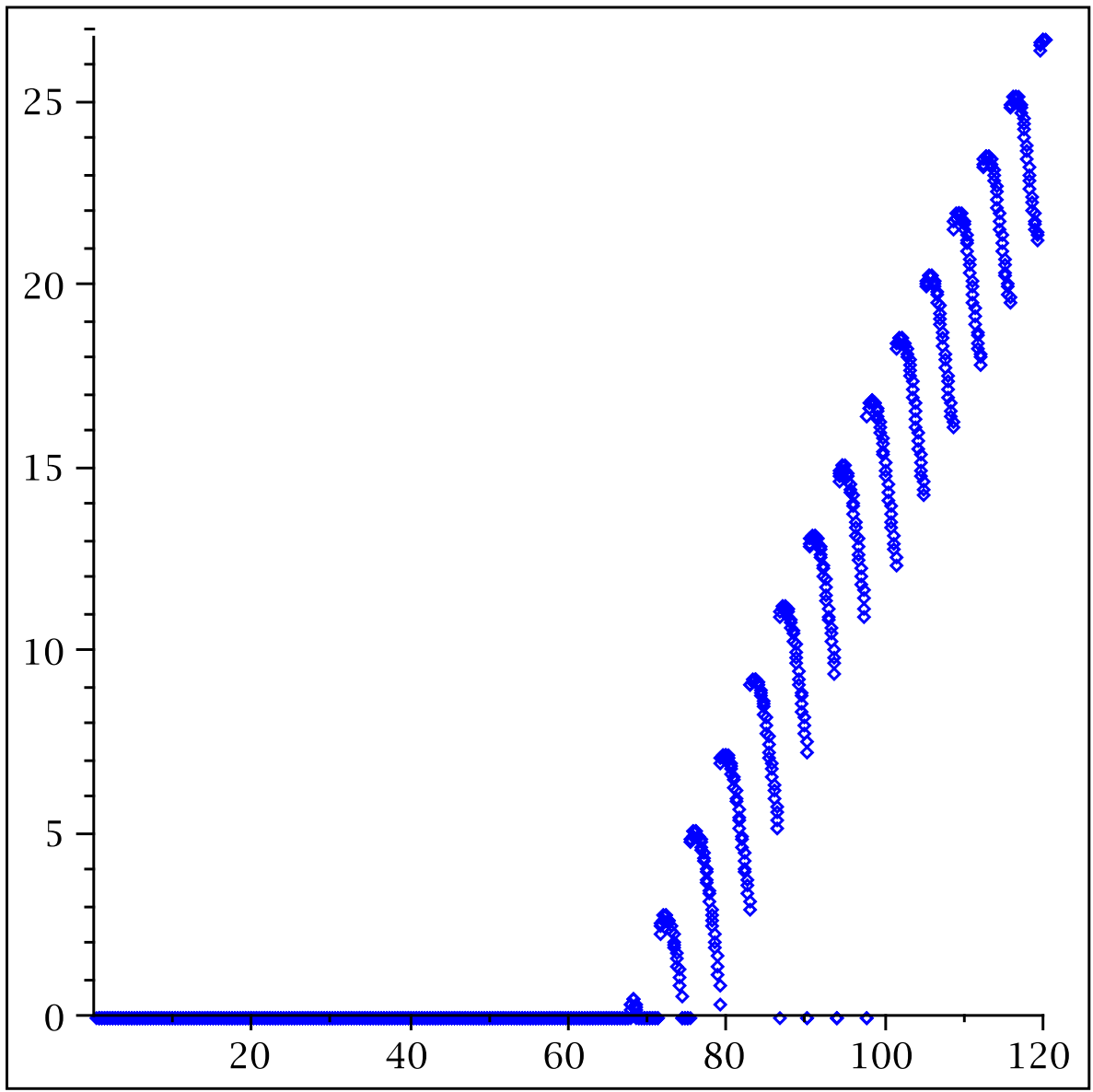}
\caption{The M-curve for $\Xi _3 (z)$, $0 \le \Re (z) \le 120$.}
\label{minmod2}
\end{figure}

\begin{figure}
\includegraphics[scale=.8]{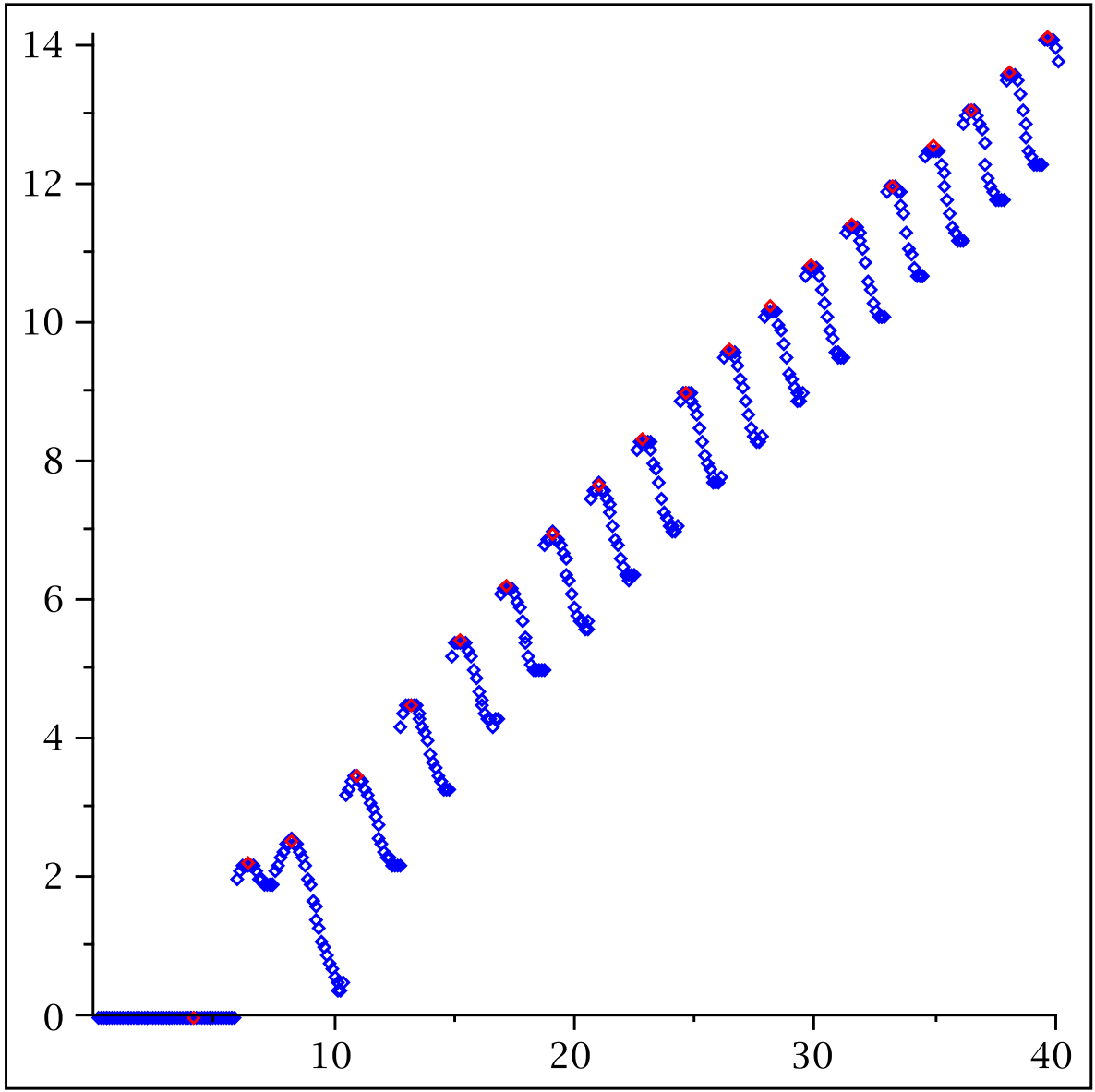}
\caption{The M-curve for a sum of $8$ arbitrarily chosen
generalized hyperbolic gamma functions.  The zeros (of the sum of these $8$)
 are overlaid in red.}
\label{minmod3}
\end{figure}

\newpage

\section{Appendix}
\label{Appendix}

$Digits:=25$, zeros of first $\Xi$-approximate $\Xi_1(z)$ in rectangle with corners $(0,-.1),(100,500)$.
\medskip

    14.04543957882981756479858,

    20.62534600592171760132974 + 2.697151842339519632505712 I 

    26.05616693357829946749575 + 7.125359707612690330897455 I 

    31.50143137824977099308422 + 10.72915037105496782822450 I 

    36.72702276874255239918647 + 13.75961410603683555833019 I 

    41.73703479849622101486046 + 16.44012737324329251859479 I 

    46.56622866997881255099908 + 18.88186965378958902053812 I 

    51.24456582311629453468990 + 21.14750420601374895347492 I 

    55.79525368022472028456165 + 23.27625685820891335493023 I 

    60.23621426525993802296865 + 25.29458549895993860216014 I 

    64.58150497097301796850798 + 27.22133555778112035831075 I 

    68.84235653395121330563843 + 29.07049609150585601287785 I 

    73.02789933182939276748060 + 30.85279227139366634464017 I 

    77.14567324003250763696303 + 32.57666324204392832752644 I 

    81.20199121212953110713480 + 34.24889253114939152723783 I 

    85.20220345212231662722890 + 35.87503096670553315342957 I 

    89.15089297349449318064800 + 37.45968995259880236581690 I 

    93.05202292717284600209187 + 39.00675061925213970478000 I 

    96.90904939663401491219210 + 40.51951660155401879741380 I

\medskip

                               1st differences of imaginary parts of zeros
\medskip

                           2.697151842339519632505712

                           4.428207865273170698391743

                           3.603790663442277497327045

                           3.03046373498186773010569

                           2.68051326720645696026460

                           2.44174228054629650194333

                           2.26563455222415993293680

                           2.12875265219516440145531

                           2.01832864075102524722991

                           1.92675005882118175615061

                           1.84916053372473565456710

                           1.78229617988781033176232

                           1.72387097065026198288627

                           1.67222928910546319971139

                           1.62613843555614162619174

                           1.58465898589326921238733

                           1.54706066665333733896310

                           1.51276598230187909263380

\medskip

$Digits:=35$, zeros of $\Xi_2(z)$ in rectangle with corners $(0,-.1),(102,100)$.
\medskip

14.1347251016150223590867934323428320

21.0220425550989420016995644399118565

25.0108186655252580507941448128443705

30.4267684045343046403275534114141879

32.9244008910878391337889771099093884

37.8603410698702633339745855804483258

39.5324810797879508571153350051479630

43.1389080680988950956236929709951507 + 3.28097100306881350163884998294539870 I

47.5227563037011494043923449534796642 + 6.25250911174970354129046143001000600 I

51.8283147335875594556295444210165891 + 8.95857384762622450769949019637180856 I

56.1120029199774070873341435705058778 + 11.4796085175258061625014962442075566 I 

60.3502919249623580499827793888173447 + 13.8416168548275384327656057308155260 I

64.5390675686875954205140494065924195 + 16.0705629583418306170962131727661418 I

68.6761126675575950561373116041091163 + 18.1868197094425056650011185673044960 I

72.7617836933667997393215687715081074 + 20.2066697023948159796720721861483021 I 

76.7975312791721943222372025862518267 + 22.1432248556299027619859319905393103 I

80.7854051988095067379835000872591797 + 24.0071081808438939881307280663316996 I

84.7277019646143359935370107950650673 + 25.8070157111458796320542608491379890 I

88.6267772741056992788936274593068783 + 27.5501321650503597200806915781312903 I 

92.4849402660525200867715711075727066 + 29.2424518834035459512085274002314648 I

96.3043979005927443711172685790765488 + 30.8890225459009695084228517057586202 I 

100.087228624602886284768301012877784 + 32.4941324878154139135000560357642324 I

\medskip

 $Digits:=35$, zeros of $\Xi_3(z)$ in rectangle with corners $(0,-.1),(66,67)$.
\medskip

14.13472514173469376946955013374227983392383419451107438

21.02203963877155659023693964971327708279243073624430253

25.01085758014566673273814860967472222032345345332059477

30.42487612586063748020316156144982639562443306193294638

32.93506158773265354499645423632051649764195395468931578

37.58617815896445021695716617231461056957864550061946742

40.91871901006637538924149719014594063254432154184503222

43.32707329088964789958258005109768490700030014005496697

48.00515051035397786165873118175586051353275737166717210

49.77383399904302558365984997979234680103752864503720614

52.97031190983202078103080161894128897412215226172440818

56.44637829985894482385431963634767652784719134486888209

59.34511184966886217051112646891854833040742035322093409

60.83672336038805375320166674337805885068490701004798353

65.03207377198913910137679883482704065081967504834437506

\medskip
$Digits:=35$, zeros of first approximate $\Xi_{\Delta ,1}$ to the
Ramanujan $\Xi$-function in rectangle with corners $(0,-.1),(100,100)$.
\medskip

9.5489635091412543125672429455388951 + 1.7119172216212706912355445109132207 I

13.789433051893611052566335275629933 + 5.2848734079809646491896114233647142 I

17.427358570320731571246782104695096 + 7.7540563400814520087583759352815352 I

20.757098189027862070211629799284288 + 9.7974743824676388131629735365802915 I

23.879056232416349736892490077064465 + 11.592456287772395127573044664358961 I

26.847356442625470267126707245453301 + 13.219199923922411333605849186386965 I

29.695457505212671889597523331405240 + 14.722240962151850502838429485181816 I

32.445858241734786458856784110868772 + 16.129361168830359821304709531815352 I

35.114596915766277030958889001310311 + 17.459270918804088444729979963005156 I

37.713610437352047890893961093475018 + 18.725280350575037459960230440611499 I

40.252083817737620602530239123250025 + 19.937256671472885578681886805140642 I

42.737274291251073747956340698712113 + 21.102754510914081337479910411119165 I

45.175041128226087752248534272004155 + 22.227709943351466358153291572887016 I

47.570200654826011921382328419781717 + 23.316887844914435394477534028073124 I

49.926772425743961667810425669964510 + 24.374181909750589494277749519061023 I

52.248154899971609917404551354946461 + 25.402822596146688919661995414148711 I

54.537253919041181821883861911330465 + 26.405525330914909309258552834547793 I

56.796578677359823555284912818063663 + 27.384598689419083619944995509463388 I

59.028314745351122620881979673545968 + 28.342025013215928897705357026998109 I

61.234380542300284878652304849348298 + 29.279521587539252797178004730959225 I

63.416471643946175987038219068629433 + 30.198587816232713426722974287579177 I

65.576095995892802432660080685311982 + 31.100542121402673258701892990637098 I

67.714602225166643613493231218811692 + 31.986551176775085187674532167087986 I

69.833202642000750529879098123320031 + 32.857653335480587110184136136077900 I

71.932992106046633644353345531169636 + 33.714777601864923783681632805009935 I

74.014963635252309537091144416299462 + 34.558759141214355169362183745447748 I

76.080021422732136425762615907263059 + 35.390352069528146990609600601997921 I

78.128991771589170231983397657306861 + 36.210240084528180967081058119582790 I

80.162632342789627094434403795275856 + 37.019045367211828466413017905373453 I

82.181640025246587241041487951836980 + 37.817336085881271861478883874617446 I

84.186657672248860977039557949370316 + 38.605632761831562562091441356062965 I

86.178279898670686132064566105956960 + 39.384413700925457309936759214348925 I

88.157058095044852280187866298051226 + 40.154119653347769812443058137516104 I

90.123504784721498632008498428129335 + 40.915157831527436633928427478875093 I

92.078097426893282987651528394221783 + 41.667905391109706656241503132525470 I

94.021281749721926032933713327889188 + 42.412712460188349826344885668577328 I

95.953474683021781433616191339406534 + 43.149904786473380482490406058490112 I

97.875066948097497309259057547588008 + 43.879786059713960896674731680251557 I

99.786425352756449975929052839495573 + 44.602639956801403127156000977181747 I

\medskip
$Digits:=25$, zeros of second approximate $\Xi_{\Delta ,N}$ to the
Ramanujan $\Xi$-function in rectangle with corners $(0,-.1),(100,200)$.
\medskip

             9.19376894449217893

             13.8348516743546257 + 1.54300060155520050 I

             17.0006907368319637 + 4.58047657550007841 I

             20.1893418670461022 + 6.99896197324829350 I

             23.2665587059084961 + 9.04437482423736081 I

             26.2157755564237380 + 10.8464634984827865 I

             29.0559366154717340 + 12.4806210190923921 I

             31.8034718883051467 + 13.9910040730670684 I

             34.4718383897903041 + 15.4050720323654861 I

             37.0718504676122273 + 16.7414677662025767 I

             39.6120959833238977 + 18.0135340186718953 I

             42.0994917987503797 + 19.2311725419877171 I

             44.5397026159453686 + 20.4019721238147549 I

             46.9374285351558464 + 21.5319037938006434 I

             49.2966167316589794 + 22.6257645644359874 I

             51.6206201858570109 + 23.6874764176583670 I

             53.9123161924908609 + 24.7202947836257462 I

             56.1741960636482404 + 25.7269571656683657 I

             58.4084343572848694 + 26.7097915965161903 I

             60.6169430187982438 + 27.6707975724897949 I

             62.8014141732518389 + 28.6117075511920988 I

             64.9633543292524033 + 29.5340343986557084 I

             67.1041120187127688 + 30.4391085162350689 I

             69.2249003477575950 + 31.3281072698702158 I

             71.3268155548021774 + 32.2020785869307989 I

             73.4108524066057312 + 33.0619600720511948 I

             75.4779170679704262 + 33.9085946393025650 I

             77.5288379348185691 + 34.7427434066455078 I

             79.5643748117146020 + 35.5650964167873912 I

             81.5852267334967588 + 36.3762816159807758 I

             83.5920386687224333 + 37.1768724246086591 I

             85.5854072948756678 + 37.9673941604083026 I

             87.5658859982298918 + 38.7483295199556847 I

             89.5339892223226212 + 39.5201232818472100 I

             91.4901962662154965 + 40.2831863625189748 I

             93.4349546156253617 + 41.0378993303884662 I

             95.3686828755524769 + 41.7846154642047194 I

             97.2917733614022269 + 42.5236634258506564 I

             99.2045943961865493 + 43.2553496053959862 I

\medskip

 $Digits:=25$, 
zeros, in rectangle with corners $(10000.0,500),(10025.8,10000)$,
of linear combination of two generalized hyperbolic gamma functions
\begin{align}
\sum_{k=1}^2 \beta _k G(z;A_k,B_k,\alpha _k,w)
\end{align}
where
\begin{align}
\beta _1 &= 1.0+.3i, \, \beta _2=-3-i, 
\alpha _1 = 1.0+.3i, \, \alpha _2=-3-i, \\
w&=0.7648421872844884262558600 + 0.6442176872376910536726144 I \\
B_1&=1.0, \, B_2 = 2.2, \, A_1 = \pi , A_2 = 7.853981633974483096156608
\end{align}

    10000.08899697917243762827 + 1943.699378898319010297001 I,

    10000.84424695361683674126 + 1943.828178641310676925944 I,

    10001.59949024766823939791 + 1943.956976210289663002380 I,

    10002.35472686200573704240 + 1944.085771605493580928470 I,

    10003.10995679730829337130 + 1944.214564827159996509445 I,

    10003.86518005425474436858 + 1944.343355875526428966768 I,

    10004.62039663352379834061 + 1944.472144750830350951281 I,

    10005.37560653579403595119 + 1944.600931453309188556355 I,

    10006.13080976174391025650 + 1944.729715983200321331031 I,

    10006.88600631205174674006 + 1944.858498340741082293155 I,

    10007.64119618739574334772 + 1944.987278526168757942515 I,

    10008.39637938845397052261 + 1945.116056539720588273964 I,

    10009.15155591590437124002 + 1945.244832381633766790543 I,

    10009.90672577042476104237 + 1945.373606052145440516603 I,

    10010.66188895269282807412 + 1945.502377551492710010906 I,

    10011.41704546338613311658 + 1945.631146879912629379767 I,

    10012.17219530318210962295 + 1945.759914037642206290096 I,

    10012.92733847275806375302 + 1945.888679024918401982558 I,

    10013.68247497279117440814 + 1946.017441841978131284633 I,

    10014.43760480395849326601 + 1946.146202489058262623716 I,

    10015.19272796693694481556 + 1946.274960966395618040195 I,

    10015.94784446240332639171 + 1946.403717274226973200536 I,

    10016.70295429103430821021 + 1946.532471412789057410354 I,

    10017.45805745350643340246 + 1946.661223382318553627526 I,

    10018.21315395049611805021 + 1946.789973183052098475038 I,

    10018.96824378267965122043 + 1946.918720815226282254485 I,

    10019.72332695073319499999 + 1947.047466279077648958683 I,

    10020.47840345533278453047 + 1947.176209574842696284945 I,

    10021.23347329715432804280 + 1947.304950702757875648075 I,

    10021.98853647687360689210 + 1947.433689663059592193412 I,

    10022.74359299516627559224 + 1947.562426455984204809882 I,

    10023.49864285270786185070 + 1947.691161081768026142988 I,

    10024.25368605017376660312 + 1947.819893540647322607883 I,

    10025.00872258823926404804 + 1947.948623832858314402362 I,

    10025.76375246757950168151 + 1948.077351958637175519890 I,

\medskip

                     1st differences of imaginary parts

\medskip
                            0.128799742991666628943

                            0.128797568978986076436

                            0.128795395203917926090

                            0.128793221666415580975

                            0.128791048366432457323

                            0.128788875303921984513

                            0.128786702478837605074

                            0.128784529891132774676

                            0.128782357540760962124

                            0.128780185427675649360

                            0.128778013551830331449

                            0.128775841913178516579

                            0.128773670511673726060

                            0.128771499347269494303

                            0.128769328419919368861

                            0.128767157729576910329

                            0.128764987276195692462

                            0.128762817059729302075

                            0.128760647080131339083

                            0.128758477337355416479

                            0.128756307831355160341

                            0.128754138562084209818

                            0.128751969529496217172

                            0.128749800733544847512

                            0.128747632174183779447

                            0.128745463851366704198

                            0.128743295765047326262

                            0.128741127915179363130

                            0.128738960301716545337

                            0.128736792924612616470

                            0.128734625783821333106

                            0.128732458879296464895

                            0.128730292210991794479

                            0.128728125778861117528

\medskip
             2nd differences of imaginary parts of zeros (times minus $1$)
\medskip

                           $0.2174012680552507 	\times 10^{-5}$

                           $0.2173775068150346 	\times 10^{-5}$

                           $0.2173537502345115 	\times 10^{-5}$

                           $0.2173299983123652 	\times 10^{-5}$

                           $0.2173062510472810 	\times 10^{-5}$

                           $0.2172825084379439 	\times 10^{-5}$

                           $0.2172587704830398 	\times 10^{-5}$

                           $0.2172350371812552 	\times 10^{-5}$

                           $0.2172113085312764 	\times 10^{-5}$

                           $0.2171875845317911 	\times 10^{-5}$

                           $0.2171638651814870 	\times 10^{-5}$

                           $0.2171401504790519 	\times 10^{-5}$

                           $0.2171164404231757 	\times 10^{-5}$

                           $0.2170927350125442 	\times 10^{-5}$

                           $0.2170690342458532 	\times 10^{-5}$

                           $0.2170453381217867 	\times 10^{-5}$

                           $0.2170216466390387 	\times 10^{-5}$

                           $0.2169979597962992 	\times 10^{-5}$

                           $0.2169742775922604 	\times 10^{-5}$

                           $0.2169506000256138 	\times 10^{-5}$

                           $0.2169269270950523 	\times 10^{-5}$

                           $0.2169032587992646 	\times 10^{-5}$

                           $0.2168795951369660 	\times 10^{-5}$

                           $0.2168559361068065 	\times 10^{-5}$

                           $0.2168322817075249 	\times 10^{-5}$

                           $0.2168086319377936 	\times 10^{-5}$

                           $0.2167849867963132 	\times 10^{-5}$

                           $0.2167613462817793 	\times 10^{-5}$

                           $0.2167377103928867 	\times 10^{-5}$

                           $0.2167140791283364 	\times 10^{-5}$

                           $0.2166904524868211 	\times 10^{-5}$

                           $0.2166668304670416 	\times 10^{-5}$

                           $0.2166432130676951 	\times 10^{-5}$



\bibliography{/home/jhaglund/books/qtcat/qtref}

\end{document}